\def\R{{\bf R}}
\def\eps{\varepsilon}
\def\argmin{\mathop{\rm arg\,min}}
\def\A{{\cal A}}
\def\PE{\mathop{\rm PE}}
\def\ricmat{\theta}
\def\nullbox#1#2#3{\setbox0=\null
          \ht0=#1 \dp0=#2 \wd0=#3 \box0 }
\def\ns#1{\nullbox{0}{0}{0}}
\theoremstyle{definition}
\newtheorem{example}{Example}
\theoremstyle{plain}
\newtheorem{theorem}{Theorem}
\def\slug{\hbox{\kern1.5pt\vrule width2.5pt height6pt depth1.5pt\kern1.5pt}}
\def\ac#1{\textcolor{red}{#1}}
\title{\bf
Forward Approximate Solution \\
for Linear Quadratic Tracking}
\author{Alessandro Betti$^\star$, Michele Casoni$^\circ$, and Marco Gori$^\circ$\\
$\star$ MAASAI - Univ. C\^{o}te d'Azur, $\circ$ SAILab - Univ. Siena}
\begin{document}

\maketitle

\begin{abstract}
In this paper, we discuss an approximation strategy for solving 
the Linear Quadratic Tracking that is both
forward and local in time. We exploit the known form of the
value function along with a time reversal transformation
that nicely addresses the boundary condition consistency. 
We provide the results of an experimental investigation 
with the aim of showing how the proposed solution 
performs with respect to the optimal solution. Finally, we 
also show that the proposed solution turns out to be a valid
alternative to model predictive
control strategies, whose computational burden is dramatically 
reduced. 
\end{abstract}

\section{Introduction}
The linear quadratic problem and the linear quadratic problem with tracking and
as well as a general deterministic optimal control problem can be regarded
as a variational problem for an integral functional~\cite{evans2010partial}, where 
the integration is usually performed over the temporal variable. 
As such the solutions to
such problems are characterized by non-locality in time that usually
manifests itself through the need to respect the boundary conditions. 
For instance, in the
LQ problem over a finite interval the Riccati equation must be solved
starting from a final condition that is directly inherited by the final
condition we have on the value function in the Hamilton-Jacobi-Bellman equation.
Likewise, when using the Hamilton-Jacobi equation this results into
a final condition for the \emph{costate}.

In many real-world applications, however, when the number of control variables is big and
the temporal horizon on which the problem is posed is large (or infinite), a
numerical determination of the optimal solution is not feasible, which gives
rise to the interest of finding approximations of the solution that dramatically reduce
the computational cost. 
A control strategy that is  typically carried out relies on the idea of breaking down the
optimization to limited temporal windows on which the exact solution can be
explicitly computed and then by gluing together the solutions on the different
temporal windows. This strategy is referred to as 
Model Predictive Control (MPC) (see~\cite{REBLE20121812}), where the optimal
solution is approximated through an an open-loop finite horizon.

In this paper we propose a different approximation strategy in
the case of Linear Quadratic with tracking (LQT problem) that is
inspired and motivated by the observation that in the LQ problem 
Riccati's differential equation is an autonomous system that, once we
perform a time-reversal transformation can be solved forward in time. 
This corresponds with carrying out a {\em SIgn Flip Transformation}
(SIFT) that gives rise to efficient and accurate solutions especially
when the search for the optimal control is mostly focussed on
precision of the solution.
We show that, at least in the scalar case, the solution to this new forward
problem asymptotically converges to the solution of the algebraic Riccati
equation that characterizes the optimal solution on the infinite-time
horizon. Interestingly, a similar approach can be employed for the LQT problem.

The resulting method leads to a local-in-time forward computational scheme that, 
unlike MPC, does not require a receding horizon approach.  
Moreover, the proposed method yields very good
results when compared to the optimal solution and to MPC approaches,
which have an significantly higher computational (see Section~\ref{Sec:exp}).

The  paper is organized as follows. 
In Section~\ref{Sec:HJB} we review the classic  Hamilton-Jacobi-Bellman equations
with the purpose of presenting the general case in which the system and 
the Lagrangian are explicitly time-dependent. 
In Section~\ref{Sec:LQ}
we showcase our approximation strategy for the LQ problem, while the extention
to the LQT problem is discussed in Section~\ref{sec:lqt}.
The experimental results are presented in Section~\ref{Sec:exp} and, 
finally, some conclusions are drawn in Section~\ref{Sec:conc}.

\section{Hamilton-Jacobi-Bellman equations}
\label{Sec:HJB}
Throughout the paper we consider describe the dynamics of the state variable
$s\mapsto y(s)\in\R^n$ in terms of the following Cauchy problem:
\begin{equation}
\begin{cases}
y'(s) = f(y(s),\alpha(s),s) & s\in(t,T) \\
y(t)=x,
\end{cases}
\label{ODE-sys}
\end{equation}
where $\alpha\in{\cal A}:=\{\alpha\colon [0,T]\to A\subset \R^m:
\alpha\hbox{ is measurable
}\}$, $x\in\R^n$ and $f$ is a Lipshitz function of its arguments, which
is sufficient to claim the existence of the solution of ODE~(\ref{ODE-sys}). 
The ``goodness'' of the control $\alpha$, once $x$ and $t$ are fixed
is measured through the cost functional
\begin{equation}
\alpha\in\A\mapsto C_{x,t}(\alpha):=\int_t^T \ell(\alpha(s),
y_{x,t}(s,\alpha),s)\,
ds +J(y_{x,t}(T,\alpha)),
\end{equation}
where $y_{x,t}(s,\alpha)$ solves~(\ref{ODE-sys}). We define the value function as
$v(x,t):= \inf_{\alpha\in\A} C_{x,t}(\alpha)$.
For the rest of the article we will assume $J\equiv0$ and will make massive use of 
the Hamiltonian function
$H\colon\R^n\times\R^n\times[0,T]\to \R$
\begin{equation}
H(\xi,p,s):=\min_{a\in A} \{p\cdot f(\xi ,a,s)+\ell(a,\xi,s)\}.
\end{equation}
Here we review the classic Hamilton-Jacobi-Bellman equation and 
give its proof in the general case in which there is an 
explicit temporal dependence in the Lagrangian.

\begin{theorem}[HJB] \label{th:HJB}
Let us assume that $D$ denotes the gradient operator with respect to x.
Furthermore, let us assume that $v\in
C^1(\R^n\times[0,T],\R)$ and that the minimum of $C_{x,t}$ exists for
every $x\in\R^n$ and for every $t\in[0,T]$. Then
$v$ solves the PDE
\begin{equation}
v_t(x,t)+H(x,Dv(x,t),t)=0,\qquad
(x,t)\in \R^n\times[0,T), 
\label{HJB}
\end{equation}
with terminal condition $v(x,T)=0,\quad \forall x\in\R^n$.
\end{theorem}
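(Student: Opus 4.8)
The plan is to obtain the terminal condition directly and then derive the interior equation~(\ref{HJB}) from the \emph{Dynamic Programming Principle} (DPP). The terminal condition is immediate: since $J\equiv0$, at $t=T$ the cost collapses to $\int_T^T\ell\,ds=0$ for every admissible control, hence $v(x,T)=0$. For the interior equation I would first establish the DPP, which asserts that for every $(x,t)\in\R^n\times[0,T)$ and every sufficiently small $h>0$,
\begin{equation}
v(x,t)=\inf_{\alpha\in\A}\Bigl\{\int_t^{t+h}\ell(\alpha(s),y_{x,t}(s,\alpha),s)\,ds+v(y_{x,t}(t+h,\alpha),t+h)\Bigr\}.
\label{DPP}
\end{equation}
This identity---expressing that a policy optimal on $[t,T]$ must still be optimal on $[t+h,T]$ from the reached state---is the real engine of the argument, and it follows from splitting the cost integral at $t+h$ together with the uniqueness of trajectories guaranteed by the Lipschitz hypothesis on $f$ and the existence of the minimizer assumed in the statement.

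With~(\ref{DPP}) in hand I would prove the two opposite inequalities. For $v_t(x,t)+H(x,Dv(x,t),t)\ge0$, fix an arbitrary constant control $\alpha(s)\equiv a\in A$ on $[t,t+h]$. By~(\ref{DPP}) the value $v(x,t)$ is no larger than the cost of this particular choice, so after rearranging and dividing by $h$,
\begin{equation}
0\le\frac1h\int_t^{t+h}\ell(a,y(s),s)\,ds+\frac{v(y(t+h),t+h)-v(x,t)}{h}.
\end{equation}
Letting $h\to0^+$ and using $v\in C^1$ together with $y(t)=x$ and $y'(t)=f(x,a,t)$, the difference quotient tends to $v_t(x,t)+Dv(x,t)\cdot f(x,a,t)$ and the averaged integral to $\ell(a,x,t)$, giving $0\le v_t(x,t)+Dv(x,t)\cdot f(x,a,t)+\ell(a,x,t)$. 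Taking the infimum over $a\in A$ then yields precisely $v_t(x,t)+H(x,Dv(x,t),t)\ge0$.

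The reverse inequality is where I expect the main difficulty. For each $h>0$ I would select a near-optimal control $\alpha^\eps$ (depending on $h$) realizing the infimum in~(\ref{DPP}) up to an error $\eps h$, write the increment of $v$ along the associated trajectory $y^\eps$ as $\int_t^{t+h}\bigl(v_t+Dv\cdot f\bigr)\,ds$ via the fundamental theorem of calculus, and bound the integrand from below using $Dv\cdot f+\ell\ge H$, which is exactly the definition of the Hamiltonian. Dividing by $h$ and letting $h\to0^+$ produces $v_t(x,t)+H(x,Dv(x,t),t)\le\eps$, whence the claim by the arbitrariness of $\eps$. The delicate point is the limit passage: because $\alpha^\eps$ varies with $h$, I cannot simply invoke continuity along a fixed trajectory and must instead show that $y^\eps(s)\to x$ uniformly as $h\to0^+$, which is where the boundedness and Lipschitz character of $f$ (and hence the equicontinuity of the trajectories) is used to control the averaged integrand $\frac1h\int_t^{t+h}\bigl(v_t+H\bigr)\,ds$ and justify its convergence to $v_t(x,t)+H(x,Dv(x,t),t)$. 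Combining the two inequalities gives~(\ref{HJB}).
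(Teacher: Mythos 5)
Your proof is correct in substance and the first half (the inequality $v_t+H\ge 0$ via a constant control on $[t,t+h]$) is essentially identical to the paper's argument. The second half, however, takes a genuinely different route. The paper exploits the standing hypothesis that the minimum of $C_{x,t}$ is attained: it picks an exact minimizer $\alpha^*$, writes the dynamic-programming identity $v(x,t)=\int_t^{t+\eps}\ell(\alpha^*(s),y_{x,t}(s,\alpha^*),s)\,ds+v(y_{x,t}(t+\eps,\alpha^*),t+\eps)$, divides by $\eps$, and passes to the limit to obtain the \emph{exact equality} $v_t(x,t)+Dv(x,t)\cdot f(x,a^*,t)+\ell(a^*,x,t)=0$ with $a^*:=\alpha^*(t)$; combined with the first inequality this shows at once that the infimum over $A$ is attained at $a^*$ and equals $0$. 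You instead work with $\eps h$-optimal controls $\alpha^\eps$ depending on $h$, bound the integrand from below by $H$ using the definition of the Hamiltonian, and control the limit via equicontinuity of the trajectories. Your route is the standard textbook one (Evans) and is more robust: it does not require the minimum to exist, only the infimum, and it avoids the paper's reliance on the pointwise value $\alpha^*(t)$ of a merely measurable control --- a step which, strictly speaking, needs $t$ to be a Lebesgue point of $s\mapsto\ell(\alpha^*(s),y_{x,t}(s,\alpha^*),s)$ and which the paper glosses over. The price you pay is the extra technical machinery you correctly flag (uniform convergence $y^\eps(s)\to x$, continuity of $H$, boundedness of $f$ along the relevant trajectories), none of which is needed in the paper's shorter argument once the existence of an exact minimizer is assumed, as it is in the theorem's hypotheses. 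Both arguments are valid proofs of the statement; note only that the terminal condition $v(x,T)=0$, which you dispatch correctly in one line, is also essentially immediate in the paper's setting since $J\equiv 0$.
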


\begin{proof} 
Let $t\in[0,T)$ and $x\in\R^n$ be.
Furthermore, instead of the optimal
control let us use a constant control $\alpha_1(s)=a \in A$ for times $s\in[t,t+\epsilon]$
and then the optimal control for the remaining temporal interval.
More precisely let us denote with $y_{x,t}(\cdot, a)$ a solution of 
\begin{equation}
\begin{cases}
y'(s) = f(y(s),a,s) & s\in(t,T)\\
y(t)=x
\end{cases}
\end{equation}
and choose $\alpha_2\in\argmin_{\alpha\in\A} C_{y_{x,t}(t+\eps,
a),t+\eps}(\alpha)$. Now consider the following control
\begin{equation}
\alpha_3(s)=
\begin{cases}
\alpha_1(s) &  \text{if} \ s\in[t,t+\eps)\\
\alpha_2(s) &  \text{if} \ s\in[t+\eps,T] \ .
\end{cases}
\end{equation}
Then the cost associated to this control is
\begin{align}
C_{x,t}(\alpha_3)&=\int_t^{t+\eps} \ell(a, y_{x,t}(s,a),s)\,ds
+\int_{t+\eps}^T \ell(\alpha_2(s), y_{x,t}(s,\alpha_2),s)\,ds \\
&= \int_t^{t+\eps} \ell(a, y_{x,t}(s,a),s)\,ds
+v(y_{x,t}(t+\eps, a),t+\eps)
\end{align}
By definition of value function we also have that $v(x,t)\le
C_{x,t}(\alpha_3)$. When  rearranging this inequality, dividing by
$\eps$, and making use of the above relation we have
\begin{equation}
\frac{v(y_{x,t}(t+\eps, a),t+\eps)-v(x,t)}{\eps}+
\frac{1}{\eps}\int_t^{t+\eps} \ell(a, y_{x,t}(s,a),s)\,ds\ge0
\end{equation}
Now taking the limit as $\eps\to 0$ and making use of the fact that
$y'_{x,t}(t, a)=f(x,a,t)$ we get
\begin{equation}
v_t(x,t)+ Dv(x,t)\cdot f(x,a,t)+\ell(a,x,t)\ge0.
\end{equation}
Since this inequality holds for any chosen $a\in A$ we can say that
\begin{equation}
\inf_{a\in A} \{v_t(x,t)+ Dv(x,t)\cdot f(x,a,t)+\ell(a,x,t)\}\ge 0
\end{equation}
Now we show that the $\inf$ is actually a $\min$ and, moreover, that minimum is
$0$. To do this we simply choose
$\alpha^*\in\argmin_{\alpha\in\A} C_{x,t}(\alpha)$ and denote
$a^*:=\alpha^*(t)$, then 
\begin{equation}
v(x,t)= \int_t^{t+\eps} \ell(\alpha^*(s), y_{x,t}(s,\alpha^*),s)\,ds
+v(y_{x,t}(t+\eps, \alpha^*),t+\eps).
\end{equation}
Then again dividing by $\eps$ and using that
$y'_{x,t}(t, \alpha(t))=f(x,a^*,t)$ we finally get
\begin{equation}
v_t(x,t)+ Dv(x,t)\cdot f(x,a^*,t)+\ell(a^*,x,t)=0
\end{equation}
But since $a^*\in A$ and we knew that
$\inf_{a\in A} \{v_t(x,t)+ Dv(x,t)\cdot f(x,a,t)+\ell(a,x,t)\}\ge 0$
it means that
\begin{align}
\begin{split}
&\inf_{a\in A} \{v_t(x,t)+ Dv(x,t)\cdot f(x,a,t)+\ell(a,x,t)\}
= \\
&\min_{a\in A} \{v_t(x,t)+ Dv(x,t)\cdot f(x,a,t)+\ell(a,x,t)\}=0.
\end{split}
\end{align}
Recalling the definition of $H$ we immediately see that
the last inequality is exactly (HJB). %\slug
\end{proof}

\iffalse
\ac{Are we using this? If not delete!}
Sometimes it is more interesting to consider infinite horizon control problems;
in this case it is useful to consider the discounted cost functional
\begin{equation}
C^\lambda_x(\alpha):=\int_0^{+\infty} e^{-\lambda s}
\ell(\alpha(s), y_x(s,\alpha))ds \ ,
\end{equation}
where $\lambda>0$. The definition of the value function in this case
is similar to the one given on finite horizon, i.e. $v(x):=\inf_{\alpha\in\A}
C^\lambda_x(\alpha)$. In this case, of course the initial condition is given at
$t=0$, so that the state satisfies
\begin{equation}
\begin{cases}
\dot y(s) = f(y(s),\alpha(s),s),& s > 0 \\
y(0)=x 
\end{cases} \ .
\end{equation}
\fi

\section{The LQ problem}\label{Sec:LQ}
Now we consider the case of linear-quadratic control. In particular 
we assume that 
$f(x,a,s)=Ax+Ba$ and $\ell(a,x,s)=x\cdot Qx/2+ a\cdot Ra/2$ and
$J(x)=x\cdot Dx/2$, where $A$, $Q$, $D$ are in $\R^{n\times n}$,
$B\in\R^{n\times m}$ and $R\in\R^{m\times m}$, where $Q$, $R$, $D$
are non-negative definite and symmetric and $R$ is invertible.

We begin by noticing that 
\begin{align*}
\begin{split}
-R^{-1}B'Dv(x,t)=:\alpha(x,t)&\in\argmin_{a\in\R^m}
(Ax+Ba)\cdot Dv(x,t)+\frac{1}{2}x\cdot Qx+\frac{1}{2}a\cdot Ra \\
&=\Big\{-R^{-1}B'Dv(x,t)\Big\}.
\end{split}
\end{align*}
Hence HJB equation becomes
\begin{equation}
v_t(x,t)+Ax\cdot Dv(x,t)-\frac{1}{2}BR^{-1}B'Dv(x,t)\cdot Dv(x,t)
+\frac{1}{2}x\cdot Qx=0.
\end{equation}
At this point we guess that $v(x,t)=x\cdot \ricmat(t)x/2$, where
$\ricmat(T)=0$
so that with this choice we get (assuming to have chosen the initial
temporal instant to be 0)
% ???
\begin{equation}
x\cdot (\ricmat'(t)+2\ricmat(t)A-\ricmat(t)BR^{-1}B'\ricmat(t)+Q)x=0,\quad \forall x\in\R^n,
t\in(0,T),
\end{equation}
which implies
% ??? servirebbe dimostare che x non può essere zero. Mi pare
% invece che nel caso multivariabile possa essere zero - anche se su 
% punti isolati
\begin{equation}
\begin{cases}
\ricmat'(t)-\ricmat(t)BR^{-1}B'\ricmat(t)+\ricmat(t)A+A'\ricmat(t)+Q=0&  t\in(0,T) \\
\ricmat(T)=D
\end{cases}
\label{eq2}
\end{equation}

\subsubsection*{SIFT: \textrm{Sign Flip Transformation}}
Now we introduce the basic idea for attacking LQ and, later on, LQT. 
Since Riccati equation is an autonomous system of equations, 
instead of using the final condition what we can change variables using
$$
\Phi\colon s\in [0,T]\to \tau\in[0,T], \qquad \tau:=T-s
$$
If we define $\hat \ricmat:= \ricmat\circ\Phi^{-1}$, we have
$\forall \tau\in[0,T]$ that $\hat \ricmat(\tau)= \ricmat(\Phi^{-1}(\tau))=
\ricmat(T-\tau)$.
In the new variable the Riccati equation becomes
\begin{align*}
\hat \ricmat'(\tau)&=-\ricmat'(T-\tau) \\
&=-\ricmat(T-\tau)BR^{-1}B'\ricmat(T-\tau)+\ricmat(T-\tau)A+A'\ricmat(T-\tau)+Q \\
&=-\hat \ricmat(\tau)BR^{-1}B'\hat \ricmat(\tau)+\hat \ricmat(\tau)A+A'\hat \ricmat(\tau)+Q, 
\end{align*}
with initial condition $\hat \ricmat(0)=\ricmat(T)=D$. Hence solving
\eqref{eq2} is equivalent to 
\begin{equation}
\begin{cases}
\hat \ricmat'(\tau)=-\hat \ricmat(\tau)BR^{-1}B'\hat \ricmat(\tau)+\hat \ricmat(\tau)A+A'\hat
\ricmat(\tau)+Q& \text{for} \ \tau\in(0,T) \\
\hat \ricmat(0)=D,
\end{cases}
\label{FlipLQ-eq}
\end{equation}
which can be solved forward in time. Throughout this paper, this is 
referred to as {\em flipped-sign Riccati's equation}.

\paragraph{The Case $m=n=1$.}
Let us consider the scalar case with terminal cost
$0$ (hence $D=0$). We get
\begin{equation}
\begin{cases}
\hat \ricmat'(\tau)=-B^2(\hat \ricmat(\tau))^2/R+2A \hat \ricmat(\tau)+Q& \tau\in(0,T) \\
\hat \ricmat(0)=0
\end{cases}.
\end{equation}
Now let $u(\tau):= (B^2/R)\hat \ricmat(\tau)$ so that $u$ solves 
\begin{equation}
\begin{cases}
u'(\tau)+(u(\tau))^2-2Au(\tau)-QB^2/R=0 & \tau\in(0,T) \\
\hat u(0)=0
\end{cases}.
\end{equation}
In order to solve this equation let us introduce $y$ to be any
$C^\infty([0,T]; \R)$ function 
such that $u(\tau)=y'(\tau)/y(\tau)$. With this parametrization of the solution we immediately see that $u'+u^2=y''/y$ hence the differential equation
becomes
\begin{equation}
\frac{y''(\tau)}{y(\tau)}-2A \frac{y'(\tau)}{y(\tau)}-\frac{QB^2}{R}=0 \ ,
\end{equation}
that should be solved with initial conditions $y'(0)=0$ and $y(0)=Y\ne0$. Therefore we are left with the second order linear ODE
\begin{equation}
y''(\tau) -2A y'(\tau)-\frac{QB^2}{R} y(\tau)=0 \ .
\end{equation}
The general solution to this equation is 
$y(\tau)=\gamma e^{\lambda_1 t}+ \delta e^{\lambda_2 t}$ with
$$
\lambda_1=A+\sqrt{A^2+\frac{QB^2}{R}},\qquad \lambda_2=A-\sqrt{A^2+
\frac{QB^2}{R}}.
$$
Using the initial conditions hence we get
$$
y(\tau)=Y\Bigl(\frac{\lambda_2}{\lambda_2-\lambda_1}e^{\lambda_1\tau}
-\frac{\lambda_1}{\lambda_2-\lambda_1}e^{\lambda_2\tau}
\Bigr),\quad
y'(\tau)=Y\frac{\lambda_1\lambda_2}{\lambda_2-\lambda_1}
\Bigl(e^{\lambda_1\tau} -e^{\lambda_2\tau}
\Bigr).
$$
Hence we have
\begin{equation}
\hat \ricmat(\tau)=\frac{R}{B^2}\lambda_1\lambda_2\frac{
e^{\lambda_1\tau} -e^{\lambda_2\tau}}{
\lambda_2e^{\lambda_1\tau} -\lambda_1 e^{\lambda_2\tau}.
}
\end{equation}
As $T\to\infty$ and $\tau\to\infty$ with $\tau<T$ we have that $\hat
\ricmat(\tau)\to \lambda_1 R/B^2$.

\begin{example}
Let us consider the following scalar example where 
$A=1,Q=3,S=1$. In this case the roots of algebraic Riccati's equation are
$\ricmat_1=-1$ and $\ricmat_2=3$. The forward solution of Riccati's ODE 
is unstable, whereas the forward solution of flipped-sign Riccati's ODE is
asymptotically stable and returns $\ricmat_1=-1$.
\end{example}

\section{The LQT problem}\label{sec:lqt}
The tracking problem assumes,  that given a signal $t\in[0,+\infty)\mapsto z(t)
\in\R^p$, the state of a linear system approaches it as much as possible. 
Formally, let 
$f(\xi,a,s)=A\xi+B a+Gz(s)$ and $\ell(a,x,s)=(x-Fz(s))\cdot Q(x-Fz(s))/2+
a\cdot Ra/2$ be and
$J(x)=0$, where $A$, $Q$  are in $\R^{n\times n}$,
$B\in\R^{n\times m}$ and $R\in\R^{m\times m}$, $G\in\R^{n\times p}$,
$F\in\R^{n\times p}$ where $Q$, $R$, 
are non-negative definite and symmetric and $R$ is invertible.
In what follows we will also assume $G=0$. \newline
In this case the Hamiltonian is
\begin{equation}
H(\xi,p,s)= A\xi\cdot p-\frac{1}{2}BR^{-1}B'p\cdot p
+\frac{1}{2}(\xi-Fz(s))\cdot Q(\xi-Fz(s)) \ ,
\end{equation}
and the Hamilton-Jacobi-Bellman equation becomes
\begin{equation}
v_t(x,t)+Ax\cdot Dv(x,t)-\frac{1}{2}BR^{-1}B'Dv(x,t)\cdot Dv(x,t)
+\frac{1}{2}(x-Fz(t))\cdot Q(x-Fz(t))=0 \ .
\end{equation}
Let us now make the guess
\begin{equation}
v(x,t)=\frac{1}{2}x\cdot \theta(t)x+\eta(t)\cdot x+ f(t) \ ,
\end{equation}
there $\theta(t)\in\R^{n\times n}$ for all $t\in[0,T]$ is symmetric
and $\eta(t)\in \R^n$ for all  $t\in[0,T]$, while $f$ is a scalar function of time. With this guess HJB equations becomes
\begin{equation*}
\frac{1}{2}x\cdot (\theta'(t)-\theta(t)S \theta(t)+\theta(t)A+A'\theta(t)+Q)x
+(\eta'(t)-(A'-\theta(t)S)\eta(t))-QFz(t))\cdot x=0 \ ,
\end{equation*}
for $(x,t)\in\R^n\times[0,T]$ and with the choice $f(t)=(1/2)\int_T^t
(\eta(s)\cdot S\eta(s)-Fz(s)\cdot QFz(s))\, ds$.
Since this relation must hold for all $x\in\R^n$ it is easy to show that
(see the scalar example below) it must be 
\begin{equation}
\begin{cases}
\theta'(t)-\theta(t)S \theta(t)+\theta(t)A+A'\theta(t)+Q=0 & \text{for} \ t\in[0,T) \\
\eta'(t)-(A'-\theta(t)S)\eta(t))-QFz(t)=0 & \text{for} \ t\in[0,T)
\end{cases} \ .
\end{equation}
Similarly we derive the boundary equations $\theta(T)=\eta(T)=0$.\\

\subsubsection*{Scalar LQ with Tracking}
Consider the scalar LQ tracking problem. Let $\ell(a,\xi,s)=Q(\xi-z(s))^2/2 + R a^2/2$ and choose $f(\xi,a,s)= A\xi+B a$. With this choices we have
\begin{equation}
H(\xi,p,s)=\frac{Q}{2}(\xi-z(s))^2-\frac{S}{2}p^2+ A\xi p,
\end{equation}
with $S\equiv B^2/R$. Let us make the guess
\begin{equation}
v(x,t)=\theta(t)x^2/2 +\eta(t) x +\frac{1}{2}\int_T^t S\eta^2(s)-Qz^2(s)\, ds
\end{equation}
for some unknown functions $\theta(t)$ and $\eta(t)$. Then since
$v_t(x,t)=\theta'(t) x^2/2+\eta'(t) x+(S\eta^2(t)-Qz^2(t))/2
$ and $Dv(x,t)=\theta(t) x(t)+\eta(t)$, the (HJB) equation becomes
\begin{equation}
\frac{1}{2}(\theta'(t)-S\theta^2(t)+2A\theta(t)+Q)x^2+(\eta'(t)
+(A-S\theta(t))\eta(t)-Qz(t))x=0,
\end{equation}
which holds for all $(x,t)\in \R^n\times[0,T)$. Now we can differentiate this
expression with respect to $x$ and get $\forall (x,t)\in \R^n\times[0,T)$
\begin{equation}
(\theta'(t)-S\theta^2(t)+2A\theta(t)+Q)x+\eta'(t)
+(A-S\theta(t))\eta(t)-Qz(t)=0.
\end{equation}
Since this relation holds for all $x\in\R^n$, in particular it must hold for $x=0$, which tells us that 
\begin{equation}
\eta'(t) +(A-S\theta(t))\eta(t)-Qz(t)=0\quad \forall t\in[0,T).
\end{equation}
For all other $x\ne0$ and $t\in[0,T)$ we get
\begin{equation}
(\theta'(t)-S\theta^2(t)+2A\theta(t)+Q)x=0,
\end{equation}
which implies
\begin{equation}
\theta'(t)-S\theta^2(t)+2A\theta(t)+Q=0.
\end{equation}
Moreover the boundary condition $v(x,T)=0$ for all $x\in\R^n$ similarly implies that $\theta(T)=\eta(T)=0$. This defines the following system of differential equations
\begin{equation}
\begin{cases}
\theta'(t)-S\theta^2(t)+2A\theta(t)+Q=0;& \text{for} \ t\in[0,T) \\
\eta'(t) +(A-S\theta(t))\eta(t)-Qz(t)=0;& \text{for} \ t\in[0,T) \\
\theta(t)=\eta(t)=0.& \text{for} \ t=T
\end{cases}
\label{eq3}
\end{equation}

\subsubsection*{SIFT: Sign Flip Transformation}
When using the sign flip transformation, we get 
\begin{equation}
\begin{cases}
\hat \theta'(\tau)+S\hat\theta^2(\tau)-2A\hat\theta(\tau)-Q=0;& \text{for} \ \tau\in(0,T] \\
\hat \eta'(\tau) -(A-S \hat \theta(\tau))\hat \eta(\tau)+Q\hat z(\tau)=0;& \text{for} \ \tau\in(0,T] \\
\hat\theta(\tau)=\hat\eta(\tau)=0,& \text{for} \  \tau=0
\end{cases}
\label{eq4}
\end{equation}
where $\hat z(\tau)=z(T-\tau)$.
Notice that these equations are equivalent to Eq.~(\ref{FlipLQ-eq}) for 
LQ. In particular, we get the same Riccati's equation for $\hat\theta$.
However, in order to determine $\hat \eta$ we need the knowledge of
$z(T-\tau)$, that is we need to know the value of the tracking signal
beginning from the end of the horizon. 

\begin{comment}
\textbf{Example (Scalar LQ with tracking variant)} \newline
Consider the same setting as before but now make the
assumption $v(x,t)=\theta(t)(x-z(t))^2/2+f(t)$.
Assuming $z(t)\ne 0$ HJB equations in this case lead to
$$\frac{1}{2}(\theta'(t)-S\theta^2(t)+2A\theta(t)+Q)x^2
-z(t)\Bigl(\theta'(t)-S\theta^2(t)+\bigl(A+\frac{z'(t)}{z(t)}\bigr)
\theta(t)+Q\Bigr)x+f'(t)
+\dots=0$$
One can still choose $f$ such that $f'(t)+\dots=0$, however, the remaining
conditions are consistent only in the case $z(t)=z(0)\exp(A t)$.
\end{comment}

\subsubsection*{Kinetic terms}
In many problems of interest, including 
learning, we would like to exert a control that
has as few temporal variations as possible. This can be 
achieved by ``controlling the variations of the control''.
Consider for instance the following system
\begin{equation}\begin{cases}
\begin{pmatrix}y_1'(s)\\
y_2'(s)\end{pmatrix}=
\begin{pmatrix}a_{11}& a_{12}\\0&0\end{pmatrix}
\begin{pmatrix}y_1(s)\\
y_2(s)\end{pmatrix}+
\begin{pmatrix}0\\
b\end{pmatrix}\alpha(s) & \hbox{for $s\in(t,T]$}\\
\noalign{\medskip}
\begin{pmatrix}y_1(s)\\
y_2(s)\end{pmatrix}=
\begin{pmatrix}x_1\\
x_2\end{pmatrix}=:x & \hbox{for $s=t$}\\
\end{cases}
\label{eq:kinetic-sys}
\end{equation}
Notice that in the above system the control is the temporal derivative of
$y_2$ which, in the first equation plays the role of the control term in the
scalar version of the LQT problem. In this way we can directly
penalize the variations of $y_2$ with the following cost functional.
\begin{equation}\medmuskip -1mu
\begin{aligned}
C_{x,t}(\alpha)=\frac{1}{2}
\int_t^T &
\left(
\begin{pmatrix}{y_1}_{x,t}(s,\alpha)\\
{y_2}_{x,t}(s,\alpha)\end{pmatrix}-\begin{pmatrix}1\\
0\end{pmatrix} z(s)
\right)\cdot
\begin{pmatrix}
q&0\\0&0\\
\end{pmatrix}
\left(
\begin{pmatrix}{y_1}_{x,t}(s,\alpha)\\
{y_2}_{x,t}(s,\alpha)\end{pmatrix}-\begin{pmatrix}1\\
0\end{pmatrix} z(s)
\right)\\
\noalign{\medskip}
&\qquad\qquad\ +
R(\alpha(s))^2\, ds\end{aligned}
\label{eq:kinetic-cost}
\end{equation}
Equations~\eqref{eq:kinetic-sys} and~\eqref{eq:kinetic-cost} are still in the
form of an LQT problem (with scalar reference signal)
described in Section~\ref{sec:lqt} with the choices $p=1$, $m=1$, $n=2$,
$A=({a_{11}\atop 0} {a_{12}\atop 0})$, $B={0\choose b}$,
$Q=({q\atop 0} {0\atop 0})$, $F={1\choose 0}$, $R\in\R\setminus\{0\}$.

\section{SIFT and forward approximation for LQT}
In this Section we introduce a forward method to compute an approximate solution of the LQT problem. We focus on the scalar case discussed in Section~4. \newline
The main feature of this method is that future information about the reference signal $z(s)$ or the costate $p(s)$ is not involved for computing the trajectories of the state $y(s)$ and the costate $p(s)$, for $s \in [0, T]$. Therefore, this solution aims to approximate the optimal one updating the trajectories only forward in time. \newline
\newline
The core of the forward approximate solution can be described as follows. Recalling that
\begin{equation}
p(t) = Dv(x,t) = \theta(t) y(t) + \eta(t) \ \ \text{for} \ t \in [0, T] \ , 
\end{equation}
where $\theta(t)$ and $\eta(t)$ satisfy the system of differential equations $\eqref{eq3}$, one can provide approximations $\tilde \theta(\tau)$ and $\tilde \eta(\tau)$ for $\theta(\tau)$ and $\eta(\tau)$ respectively, solving the following system of differential equations
\begin{equation}
\begin{cases}
\tilde \theta'(\tau)+S\tilde \theta^2(\tau)-2A\tilde \theta(\tau)-Q=0& \text{for} \ \tau \in(0,T] \\
\tilde \eta'(\tau) -(A-S \tilde \theta(\tau))\tilde \eta(\tau)+Q z(\tau)=0& \text{for} \ \tau \in(0,T] \\
\tilde \theta(\tau)=\tilde \eta(\tau)=0& \text{for} \  \tau=0
\end{cases} \ ,
\label{eq5}
\end{equation}
forward in time. Notice that the equation for $\tilde \theta(\tau)$ is equivalent to the equation for $\hat \theta(\tau)$ in $\eqref{eq4}$, which is the Riccati equation and it is independent of the reference signal. Similarly to what we have done in Section~3, it can be proved that $\hat \theta(\tau)$ converges to a constant value for $\tau \to \infty$. Therefore, $\tilde \theta(\tau) = \hat \theta(\tau)$ approximates $\theta(\tau)$ for long time horizons $T$.\newline
On the other hand, the equation for $\tilde \eta(\tau)$ differs from the one for $\hat \eta(\tau)$ by the fact that the former involves the reference signal $z$ at the present time $\tau$, while the second one involves $z(T-\tau)$. This implies that $\tilde \eta(\tau)$ reproduces the dynamics of $\eta(\tau)$ (and not $\hat \eta(\tau)$), starting from the initial condition $\tilde \eta(0)=0$. \newline
\begin{comment}
Indeed, we recall that $\hat \eta(\tau) = \eta(T - \tau)$ and we observe that, for computing the value of $\hat \eta(T - \tau) = \eta(\tau)$, the value of the signal at the time $T-(T - \tau) = \tau$ is needed. This means that replacing $z(T-\tau)$ with $z(\tau)$ in the differential equation for $\hat \eta$, we get $\eta(\tau)$. 
\end{comment}
Since the initial condition for $\tilde \eta$ is (in principle) different from the optimal one for $\eta$, its trajectory represents an approximation of the optimal $\eta$. \newline
\newline
The forward approximate solution is computed as follows. For each time instant $\tau \in (0,T]$, we compute $\tilde \theta (\tau)$ and $\tilde \eta(\tau)$ forward in time, obtaining an approximation $\tilde p (\tau)$ of $p(\tau)$ by
\begin{equation}
\tilde p (\tau) = \tilde \theta (\tau) \tilde y(\tau) + \tilde \eta(\tau) \ .
\end{equation}
Recalling that the optimal control is  $\alpha^{*}(\tau) = -(B/R) p(\tau)$
inserting $\tilde \alpha(\tau) = -(B/R) \tilde p(\tau) $ in
\begin{equation}
\begin{cases}
y'(\tau)=A y(\tau) + B \alpha(\tau), \ \text{for} \ \tau \in \left[0, T \right]; \\
y(0)=x,
\end{cases}
\end{equation}
we get an approximation $\tilde y$ for the optimal trajectory of the state $y$. \newline
In Section~6, we will report the experimental results of this forward approach for different reference signals. 

\section{Experiments} \label{Sec:exp}
In this Section we report the experimental part of our work, which aims to
study the forward approximate solution of the scalar LQT for different
reference signals, with respect to different time horizons $T$ and values of $R$. The differential equations for $y$, $\tilde\theta$, and
$\tilde \eta$ are discretized using the Euler forward approximation.

Setting a time horizon $T$ and a frequency $f$, the reference signals used in
the experiments are defined as follows:
\begin{align}
\nonumber
z_1(s) &= \begin{cases} -5 \cos( 2 \pi f s) & \text{if } s \in [0, T/2]; \\
z_1(T/2) & \text{if } s \in (T/2, T],\end{cases} \\
\nonumber
z_2(s) &= \begin{cases} - 10T^{-1}(- s + T/2)[\cos(2 \pi f s ) + 0.3 \cos(5
\pi f s)] & \text{if } s \in [0, T/2);\\ 1/(1+ \exp(-s + 2T/3))& \text{if } s
\in [T/2, T],\end{cases}\\
\nonumber
z_3(s) &= \begin{cases}10T^{-1}( - s +T/2)(\cos( 2 \pi f s))/(2 \pi f s + 1) &
\text{if } s \in [0, T/2);\\
1 & \text{if } s \in [T/2, T].\end{cases}
\end{align}
It is important to notice that $z_1(s)$, $z_2(s)$ and $z_3(s)$ are
different with respect to properties of continuity and periodicity in
$[0, T]$. More precisely:
\begin{itemize}
    \item $z_1$ is continuous for every $s \in [0, T]$ and it is periodic with period $1/f$ for $s \in[0, T/2]$. For $s \in (T/2, T]$, it is constant; 
    \item $z_{2}$ is non-periodic with a discontinuity in $s=T/2$. For $s \in [T/2, T]$, it is a sigmoidal function; 
    \item $z_{3}$ is non-periodic with a discontinuity in $s=T/2$. For $s \in [T/2, T]$, it is constant. 
\end{itemize} 

These choices for $z_1$, $z_2$ and $z_3$ aim to cover a variety of
reference signals for generalization purposes.
Let us set the frequency $f$ equal to $f=0.02$ Hz. 
Considering the scalar LQT introduced in Section~$\ref{sec:lqt}$, with $A=B=Q=1$ and initial condition $y(0)= x = 2$, it is possible to compare the forward approximate solution with the optimal one. For this purpose, we report in Table~\ref{tab1}
the values of the average cost functional for both the optimal and the forward solutions, for different values of $R$ and time horizons $T$. The average is computed dividing the cost functional by the corresponding time horizon $T$.

\iffalse
\begin{table}[t]
\centering\small
\begin{tabular}{ccccccccc}
\toprule
&&&Optimal&&&&Forward&\\\cmidrule{3-5}\cmidrule{7-9}
$T$ & $R$ & $z_1$ & $z_2$ & $z_3$              &&$z_{1}$ & $z_{2}$ & $z_{3}$\\
\midrule                                       
       & 0.0008 & 0.0472 & 0.0660 & 0.0443     &&0.0518 & 0.0727 & 0.0494 \\ 
{25 s} & 0.01 & 0.2094 & 0.2636 & 0.1696       &&0.2310 &  0.2958 & 0.1932 \\
       & 1 & 3.1928 & 2.8680 & 1.7253          &&4.1923 & 4.2740 & 2.6227 \\
\midrule                                                                  
         & 0.0008 & 0.0193 & 0.0086 & 0.0051   &&0.0198 & 0.0091 & 0.0055 \\ 
{250 s}  & 0.01 & 0.1994 & 0.0481 & 0.0234     &&0.2032 & 0.0519 & 0.0256 \\
         & 1 & 7.0590 & 1.0927 & 0.4068        &&7.2292 & 1.2568 & 0.5006 \\
\midrule                                                                  
         & 0.0008 & 0.0156 & 0.0029 & 0.00099  &&0.0158 & 0.0031 & 0.00104\\
{2000 s} & 0.01 & 0.1874 & 0.0289 & 0.0073     &&0.1896 & 0.0302 & 0.0076 \\
         & 1 & 7.0455 & 0.9953 & 0.2184        &&7.1093 & 1.0373 & 0.2303 \\
\bottomrule
\end{tabular}
\caption{Values of the average cost functional obtained
with the optimal and forward  solutions varying $R$ and $T$, for the
different reference signals $z_1$, $z_2$, $z_3$ with fixed frequency $f=.02$
Hz.} \label{tab1}
\end{table}
\fi

We can compute the
percentage error $\PE(\tilde \alpha, \alpha^{*}) $
between two correspondent values of average cost functional as
follows:
\begin{equation}\label{eq:PE}
\PE(\tilde \alpha,\alpha^{*}) := \frac{C_{x,0}(\tilde \alpha)
- C_{x,0}(\alpha^{*})}{C_{x,0}(\alpha^{*})}\times 100,
\end{equation}
where $\alpha^{*}$ is the optimal control and  $\tilde \alpha$ is the
control obtained by the forward approximation method.
Since $C_{x,0}(\tilde \alpha)\ge
C_{x,0}(\alpha^{*})$, the percentage error is always non-negative.
In Table \ref{tab1}, we reported the percentage error between the values of the average cost functional for the optimal solution and the forward approximation.

\iffalse
\begin{table}[t]
\centering\small
\begin{tabular}{ccccc}
\toprule
&&&Percentage Error&\\\cmidrule{3-5}
$T$ & $R$ & $z_1$ & $z_2$ & $z_3$\\
\midrule                                       
       &   0.0008 & 9.75 \% & 10.15 \% & 11.51 \% \\ 
{25 s} &   0.01 & 10.31 \% & 12.21 \% & 13.91 \% \\
       &   1 & 31.30 \% & 49.02 \% & 52.01 \%     \\
\midrule                                                                  
         & 0.0008 & 2.59 \% & 5.81 \% & 7.84 \%    \\ 
{250 s}  & 0.01 & 1.90 \% & 7.90 \% & 9.40 \%      \\
         & 1 & 2.41 \% & 15.01 \% & 23.05 \%       \\
\midrule                                                                     
         & 0.0008 & 1.28 \% & 6.90 \% & 5.05 \%     \\
{2000 s} & 0.01 & 1.17 \% & 4.50 \% & 4.29 \%       \\
         & 1 & 0.91 \% & 4.21 \% & 5.45 \%           \\
\bottomrule
\end{tabular}
\caption{Percentage error for the values of the average cost functional obtained with the optimal and forward  solutions varying $R$ and $T$, for the
different reference signals $z_1$, $z_2$, $z_3$ with fixed frequency $f=.02$
Hz.} \label{tab2}
\end{table}
\fi

\begin{table}[t]
\centering\small
\caption{Values of the average cost functional obtained
with the optimal and forward  solutions varying $R$ and $T$, for the
different reference signals $z_1$, $z_2$, $z_3$ with fixed frequency $f=.02$
Hz and its percentage error as defined in Eq.~\eqref{eq:PE}}. \label{tab1}
\setlength{\tabcolsep}{2pt}
\begin{tabular}{c@{\hskip 10pt}ccccccccccccc}
\toprule
&&&&Optimal&&&&Forward&&&&PE\%&\\ \cmidrule{4-6}\cmidrule{8-10}\cmidrule{12-14}
$T$ & $R$ && $z_1$ & $z_2$ & $z_3$              &&$z_{1}$ & $z_{2}$ & $z_{3}$ &&$z_{1}$ & $z_{2}$ & $z_{3}$   \\
\midrule                                       
       & 8e-4 && 0.0306 & 0.0433 & 0.0065     &&0.0404 & 0.0576 & 0.0087    && 32.02   & 33.03  & 33.85  \\ 
25 s & 0.01 && 0.1296 & 0.1684 & 0.0316       &&0.1658 &  0.2207 & 0.0398   && 27.93  & 31.06  & 25.95   \\
       & 1 && 2.2223 & 2.0674 & 0.7386          &&2.8346 & 2.8804 & 1.5665    && 27.55  & 39.32  & 112.09      \\
\midrule                                                                                                  
         & 8e-4 && 0.0104 & 0.0054 & 0.0009   &&0.0116 & 0.0069 & 0.0012    && 11.53  & 27.78  & 33.33    \\ 
250 s  & 0.01 && 0.1027 & 0.0278 & 0.0063     &&0.1082 & 0.0339 & 0.0071    && 5.36  & 21.94  & 12.69      \\
         & 1 && 4.7220 & 0.7464 & 0.2280        &&4.8580 & 0.8462 & 0.3062    && 2.88  & 13.37  & 34.29       \\
\midrule                                                                                                  
         & 8e-4 && 0.0079 & 0.0016 & 0.00029  &&0.0082 & 0.0019 & 0.00032   && 3.80  & 18.75  & 10.34    \\
2e3 s & 0.01 && 0.0945 & 0.0150 & 0.0030     &&0.0969 & 0.0166 & 0.0031    && 2.54  & 10.67  & 3.33      \\
         & 1 && 4.7030 & 0.6677 & 0.1402        &&4.7620 & 0.7016 & 0.1500    && 1.26  & 5.08  & 6.99         \\
\bottomrule
\end{tabular}
\end{table}

From Table $\ref{tab1}$, it is possible to see that the forward method
produces a larger percentage error on shorter time horizons. The reason for
this behavior is what we mentioned in Section~5. Since the initial condition
for $\tilde \eta$ is different from the optimal one of $\eta^{*}$, the
forward solution approximates worse at the beginning of the time
interval. Since the average cost functional is computed dividing the integral
by $T$, the effect of the initial error is stronger on short time horizons.

On the other hand, after an initial gap, the forward approximation reproduces
the optimal solution in a reliable way. Indeed, when $T$ grows, the
percentage error reduces. \newline We can also notice that, for longer time
horizons, the impact of $R$ on the percentage error reduces. This result can
be explained by the same reason. Indeed, increasing $R$ the costate is
subject to larger variations and the same holds for $\eta$. Therefore, the
initial gap is emphasized and the percentage error is higher on shorter
horizons. In Figure \ref{Fig1} we report the trajectories of the state $y$
and the costate $p$, with $T = 250$ s for the different signals, varying the
value of $R$. The reference signal is the green solid line, the state $y$ is the black dashed
line, the costate $p$ is the orange dashed line.

\def\<#1>{\vcenter{\hbox{\includegraphics{./figs/fig1-double-#1.mps}}}}
\def\boxit#1{\vbox{\hrule\hbox{\vrule\kern3pt
\vbox{\kern3pt#1\kern3pt}\kern3pt\vrule}\hrule}}

\begin{figure}[t]
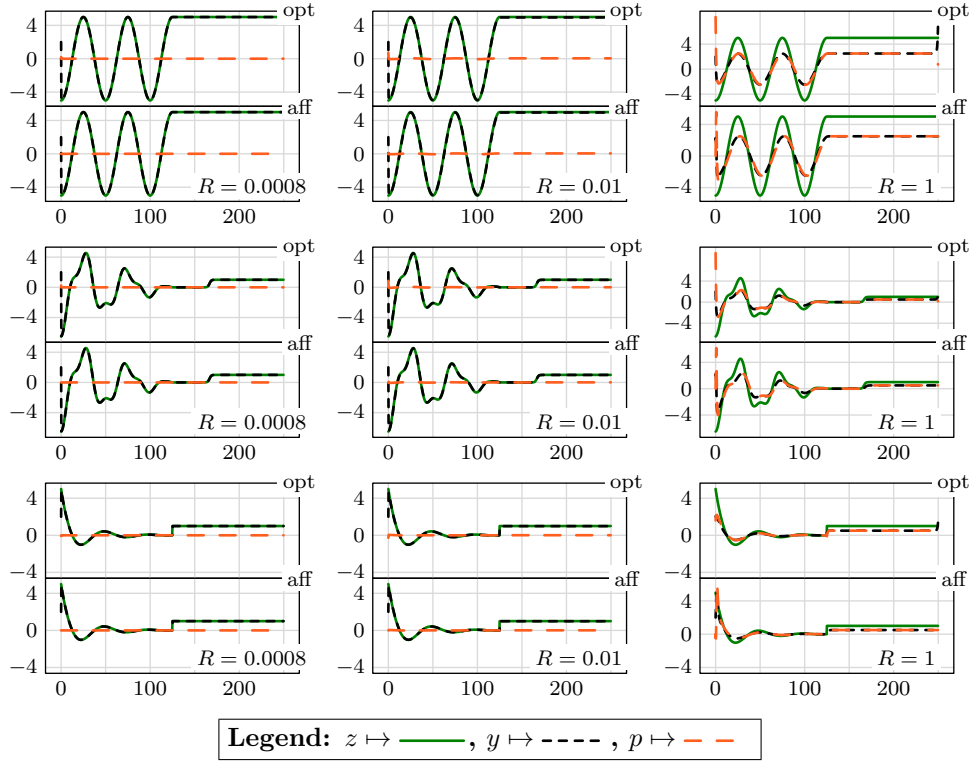

%\hbox to \hsize{\boxit{\hbox{Legend: $z\to\<990>$, $y\to\<991>$, $p\to\<992>$}}
%\hfill}
\centerline{\includegraphics{./figs/fig1-double-0.mps}
\hfil\includegraphics{./figs/fig1-double-1.mps}
\hfil\includegraphics{./figs/fig1-double-2.mps}}

\centerline{\includegraphics{./figs/fig1-double-3.mps}
\hfil\includegraphics{./figs/fig1-double-4.mps}
\hfil\includegraphics{./figs/fig1-double-5.mps}}
\centerline{\includegraphics{./figs/fig1-double-6.mps}
\hfil\includegraphics{./figs/fig1-double-7.mps}
\hfil\includegraphics{./figs/fig1-double-8.mps}}
\medskip
\hbox to \hsize{\hfil\boxit{\hbox{\bf Legend: $z\mapsto\<990>$,
$y\mapsto\<991>$, $p\mapsto\<992>$}}\hfil}

\caption{Comparison between the optimal solution (``opt'') and the forward
approximate solution (``'aff') for the different signals $z_{1}$ (first row),
$z_{2}$ (second row) and $z_{3}$ (third row)
and the different values of $R$. The time horizon $T$ is set equal to 250
s. The first row of each image is the optimal solution, while the second row
is the forward approximation.}  \label{Fig1}
\end{figure}

\subsubsection*{Comparison with MPC}
In this subsection we compare the proposed forward solution with Model Predictive Control (MPC) approach for solving the LQT problem considered up to now. \newline
Firstly, it is important to remark that MPC requires future information on the reference signal, whereas the proposed forward approximation involves the value of $z$ at present time. Indeed, let us discretize the time domain $\left[0, T\right]$ in $n$ points and call $\delta s = T / n$. Being at time $s$, MPC works as follows:
\begin{enumerate}
\item at time $s$ and for the current state $y(s)$, solve an optimal control problem over a fixed future interval $\left[s, \ s + \left(w-1 \right)\delta s \right]$, considering the current and future constraints. Here, $w$ is the fixed depth of the future interval that is considered; 
\item apply only the first step in the resulting optimal control sequence;
\item measure the reached state $y(s + \delta s)$;
\item repeat the fixed horizon optimization at time $s + \delta s$ over the interval $\left[s + \delta s, \ s + w\delta s \right]$. 
\end{enumerate}
For solving the optimal control problem at point 1, future information about
the reference signal in the time interval $\left[s, \ s + \left(w-1
\right)\delta s \right]$ is needed. In order to compare the two approaches,
it is reasonable to provide the same amount of information to both the MPC
and the proposed forward approaches. Therefore, let us set the size of the
temporal window to $w=2$, so that the future interval considered in MPC is
equal to $\left[s, s+\delta s \right]$. In this way, we make use of MPC for
computing the solution in an online manner, as we do in our approach. In
Table $\ref{tab3}$ we report the values of the average cost functional
computed for the MPC solution, for different values of $T$ and $R$. We
denoted as ``$*$'' the values of functional greater than $10^{4}$.

\begin{wraptable}{r}{10pc}
\setlength{\tabcolsep}{2pt}
\centering\small
\caption{Values of the average cost functional obtained with the MPC solution
varying $R$ and $T$, for the different reference signals $z_{1}$, $z_{2}$,
$z_{3}$ with fixed frequency $f=.02$ Hz and $w=2$.} \label{tab3}
\begin{tabular}{ccccc}
\toprule
%&&&MPC $(w=2)$&\\\cmidrule{3-5}
$T$ & $R$ & $z_1$ & $z_2$ & $z_3$\\
\midrule                                       
       & 8e-4 & $*$ & $*$ & $*$ \\ 
{25 s} & 0.01 & $*$ &  $*$ & $*$ \\
       & 1 & $*$ & $*$ & $*$ \\
\midrule                                                                  
         & 8e-4 & 2.084 & 0.319 & 0.103 \\ 
{250 s}  & 0.01 & $*$ & $*$ & $*$ \\
         & 1 & $*$ & $*$ & $*$ \\
\midrule                                                                  
         & 8e-4 & 2.085 & 0.296 & 0.062 \\
{2e3 s} & 0.01 & $*$ & $*$ & $*$ \\
         & 1 & $*$ & $*$ & $*$ \\
\bottomrule
\end{tabular}
\end{wraptable}

Comparing Tables $\ref{tab1}$ and $\ref{tab3}$ it is possible to state that, without future information on the reference signals, the forward solution approximates the optimal one way better than the MPC solution, which diverges for larger values of $T$ and $R$.

As one could expect, the approximation with the MPC solution becomes more reliable when future information is added (i.e., the depth of the future interval $w$ is increased). On the other hand, the computational cost for applying MPC grows. Indeed, if one increases $w$ for having similar results to our approach and measures the time needed by the machine for computing the approximate solution, it is possible to see that MPC requires much more time. In Table $\ref{tab4}$ we report the time for computing the approximate solution with the proposed forward approach and MPC, with reference signal $z_{1}$, $T=250$ s and $R=8e-4, \ 0.01, \ 1$. For having comparable values of the percentage error, we set $w=17, \ 85, \ 975$, for $R=8e-4, \ 0.01, \ 1$ respectively.

\begin{table}[t]
\centering\small
\caption{Cost functional, percentage error and computational time for
computing the approximate solution by MPC and by the proposed forward
approach. The reference signal is $z_{1}$ with fixed frequency $f=.02$ Hz,
the time horizon is $T=250$ s.} \label{tab4}
\begin{tabular}{cccccccccc}
\toprule
&&&&MPC&&&&Forward&\\ \cmidrule{4-6}\cmidrule{8-10}
$T$ & $R$ & $w$ & Cost & PE\% & Time              && Cost & PE\% & Time \\
\midrule                                       
       & 8e-4 & 17 & 0.0115 & 10.57 & 6.03 s     &&0.0116 & 11.53 & 0.47 s  \\ 
{250 s} & 0.01 & 85 & 0.1080 & 5.16 & 29.07 s     &&0.1082 & 5.36 & 0.42 s  \\
       & 1 & 975 & 4.8560 & 2.84 & 329.89 s       &&4.8580 & 2.88 & 0.41 s  \\
\bottomrule
\end{tabular}
\end{table}

For longer time horizons $T$, the depth $w$ of the temporal window must be increased further and the computational time becomes higher.

We can conclude that approximating the optimal solution with the proposed forward approach gives more reliable results than using MPC, when future information about the reference signals is not provided and proceeding in an online manner. Moreover, the proposed forward approach is more efficient from a computational point of view.
\section{Conclusions}
\label{Sec:conc}
In this paper we have proposed a forward on-line approach for 
the LQT problem which provides accurate approximations of the 
optimal solution especially when the cost functional is mostly 
weighing the precision term. We have given experimental evidence
of a dramatic computational improvement with respect MPC 
on comparable approximations of the optimal strategy. 
Further investigations concerning the multidimensional case 
and the formalization of this approach in the discrete setting
of computation. \\
~\\
\noindent {\bf \Large Acknowledgement}\\
~\\
We thank Gianni Bianchini, Andrea Garulli, and Lapo Faggi for insightful discussions.

\bibliographystyle{plain} 
\bibliography{refs}

\end{document}